\newtheorem{Thm}{Theorem}[section]
\newtheorem{Cor}[Thm]{Corollary}
\newtheorem{Def}[Thm]{Definition}
\newtheorem{Eg}[Thm]{Example}
\newtheorem{Hyp}[Thm]{Hypothesis}
\newtheorem{Lem}[Thm]{Lemma}
\newtheorem{Prop}[Thm]{Proposition}
\newtheorem{Qn}[Thm]{Question} 
\newtheorem{Rmk}[Thm]{Remark}
\numberwithin{equation}{Thm}
\newcommand{\FF}{\mathbb{F}}
\newcommand{\QQ}{\mathbb{Q}}
\newcommand{\ZZ}{\mathbb{Z}}
\DeclareMathOperator{\wt}{wt}
\author{Christopher Davis}
\address{University of California, Irvine, Dept of
Mathematics, Irvine, CA 92697}
\email{davis@math.uci.edu}
\date{\today}
\author{Tommy Occhipinti}
\address{University of California, Irvine, Dept of
Mathematics, Irvine, CA 92697}
\email{tocchipi@math.uci.edu}
\title{Which alternating and symmetric groups are unit groups?}
\begin{document}

\begin{abstract}
We prove there is no ring with unit group isomorphic to $S_n$ for $n \geq 5$ and that there is no ring with unit group isomorphic to $A_n$ for $n \geq 5$, $n \neq 8$.  We give examples of rings with unit groups isomorphic to $S_1$, $S_2$, $S_3$, $S_4$, $A_1$, $A_2$, $A_3$, $A_4$, and $A_8$.  We expect our methods to work similarly for other groups with trivial center; in particular, we plan to consider other simple groups in later work.  
\end{abstract}

\maketitle

\section{Introduction}

In this paper we will consider a special case of the general question: For what finite groups $G$ is there a ring with unit group isomorphic to $G$?
We shall see in the following example that this is a nontrivial condition.
\begin{Eg}
There does not exist a ring whose unit group is cyclic of order 5.  The proof is by contradiction.  A ring $R$ such that $R^{\times} \cong C_5$ would have no units of order $2$, and hence $1 = -1$ in $R$.  Thus $R$ is an $\FF_2$-algebra.  By considering the ring homomorphism
\[
\FF_2[x]/(x^5-1) \rightarrow R
\]
which sends $x$ to a generator of $R^{\times}$, and by identifying $\FF_2[x]/(x^5-1)$ with $\FF_2 \times \FF_{2^4}$, we find that $R$ must contain an isomorphic copy of $\FF_{2^4}$.  Hence $R$ has at least $15$ units, and this is a contradiction.
\end{Eg}  

\begin{Rmk}
The general question of which finite groups of odd order occur as the unit group of a ring is answered in \cite{Dit71}.
\end{Rmk}

In the present paper, we determine which symmetric groups and alternating groups are unit groups.  Our proofs are similar in several ways to the above proof.    
For example, although our groups do have elements of order 2 (except in trivial cases), we exploit the fact that our groups have no \emph{central} elements of order 2 (except in trivial cases).  

The main result proved in this paper is the following.

\begin{Thm}
The only finite symmetric groups and alternating groups which are unit groups of rings are the groups
\[
S_1, S_2, S_3, S_4, A_1, A_2, A_3, A_4, A_8.
\]
\end{Thm}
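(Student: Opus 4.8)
The plan is to exploit the observation from the introduction that $-1$ is always a \emph{central} element of $R^{\times}$, since it commutes with every element of $R$. As $S_n$ has trivial center for $n \geq 3$ and $A_n$ has trivial center for $n \geq 4$, in every case we must rule out the element $-1$ cannot have order $2$; hence $-1 = 1$ and $R$ is an $\FF_2$-algebra. I would then replace $R$ by the image $S$ of the natural map $\FF_2[R^{\times}] \to R$. This $S$ is the $\FF_2$-span of the finite set $R^{\times}$, hence a finite ring, and one checks $S^{\times} = R^{\times}$: a unit of $S$ is a unit of $R$, while every unit of $R$ already lies in $S$ together with its inverse. Thus it suffices to treat finite $\FF_2$-algebras.

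Next I would apply the structure theory of finite rings. Writing $J = J(R)$ for the Jacobson radical, $R/J$ is semisimple, so by Artin--Wedderburn together with Wedderburn's theorem on finite division rings $R/J \cong \prod_i M_{n_i}(\FF_{q_i})$ with each $q_i$ a power of $2$. Reduction modulo $J$ yields a short exact sequence $1 \to 1+J \to R^{\times} \to \prod_i \mathrm{GL}_{n_i}(\FF_{q_i}) \to 1$ in which $1+J$ is a normal $2$-subgroup (its order is $|J|$, a power of $2$). For $G = S_n$ or $G = A_n$ with $n \geq 5$ the only normal $2$-subgroup is trivial, so $J = 0$ and $R$ is semisimple. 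Because such $G$ is directly indecomposable, having a unique minimal normal subgroup $A_n$, exactly one matrix factor is nontrivial, and therefore $G \cong \mathrm{GL}_m(\FF_{2^e})$ for some $m$ and $e$.

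From here the symmetric case closes quickly via abelianization. The commutator quotient of $\mathrm{GL}_m(\FF_{2^e})$ is the image of the determinant, namely $\FF_{2^e}^{\times}$, of odd order $2^e - 1$, except in the degenerate cases $m = 1$ (abelian) and $(m,e) = (2,1)$, where $\mathrm{GL}_2(\FF_2) \cong S_3$. Since $S_n^{\mathrm{ab}} \cong C_2$ has even order for every $n \geq 2$, no $S_n$ with $n \geq 5$ can occur. The same computation forces $e = 1$ and $m \geq 3$ in the alternating case, so that $A_n \cong \mathrm{GL}_m(\FF_2) = \mathrm{PSL}_m(\FF_2)$.

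The step I expect to be the main obstacle is the resulting arithmetic problem: showing $A_n \cong \mathrm{GL}_m(\FF_2)$ has, for $n \geq 5$, only the solution $(n,m) = (8,4)$. I would match $2$-adic valuations, using $v_2(|A_n|) = n - s_2(n) - 1$, where $s_2$ is the binary digit sum, which must equal $v_2(|\mathrm{GL}_m(\FF_2)|) = \binom{m}{2}$. This forces $\binom{m}{2} \leq n-2$, so $m$ is of order $\sqrt{n}$ and $|\mathrm{GL}_m(\FF_2)| < 2^{m^2}$ is roughly $4^{n}$, far below $|A_n| = n!/2$ once $n \geq 11$. A finite check of $n \in \{5,\dots,10\}$ then leaves only $n = 8$, where $A_8 \cong \mathrm{GL}_4(\FF_2)$ by the classical exceptional isomorphism; this simultaneously supplies the positive example $A_8 \cong (M_4(\FF_2))^{\times}$. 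For the remaining positive cases I would take $\FF_2$ for the trivial groups $S_1 = A_1 = A_2$, then $\FF_3$ (or $\ZZ$) for $S_2$, $\FF_4$ for $A_3$, $M_2(\FF_2)$ for $S_3 \cong \mathrm{GL}_2(\FF_2)$, and suitable block--upper--triangular matrix rings realizing the affine groups $S_4 \cong \mathrm{AGL}_2(\FF_2)$ and $A_4 \cong \mathrm{AGL}_1(\FF_4)$ as their unit groups.
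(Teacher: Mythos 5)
Your proposal is correct, but it takes a genuinely different route from the paper after the shared opening moves (a trivial center forces characteristic $2$, and one may replace $R$ by the finite $\FF_2$-algebra generated by its units; these are the paper's Propositions~\ref{trivial center implies characteristic 2} and~\ref{ideal exists}). From there the paper stays inside the group algebra: it shows the defining ideal $I$ can contain no weight-$2$ element, and then manufactures one from the explicit unit $\iota + \tau^2 + \tau^3$ (with $\tau$ a $5$-cycle), using the normalizer--centralizer statement of Proposition~\ref{centralizer of normalizer} to pin down the group element congruent to this unit and a $16$th-power trick to dispose of the one surviving case $n=7$. You instead invoke the structure theory of finite rings: $1 + J(R)$ is a normal $2$-subgroup of $R^{\times}$, hence trivial for $S_n$ and $A_n$ with $n \geq 5$; Artin--Wedderburn, Wedderburn's little theorem, and direct indecomposability then force $G \cong \mathrm{GL}_m(\FF_{2^e})$, after which abelianization eliminates $S_n$ and reduces $A_n$ to the order equation $n!/2 = |\mathrm{GL}_m(\FF_2)|$, settled by matching $2$-adic valuations and a size estimate. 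Your argument is shorter, needs no machine verification for the nonexistence results, and generalizes at once (a nonabelian finite simple group is a unit group if and only if it is isomorphic to some $\mathrm{GL}_m(\FF_2)$), which is exactly the direction the authors announce for future work. What the paper's combinatorial route buys is the finer ideal-theoretic information used in the positive cases: $S_4$ and $A_4$ have nontrivial normal $2$-subgroups, so there the radical is nonzero, your semisimple analysis says nothing, and the paper's analysis of $I$ is what yields the classification that every ring with unit group $S_4$ contains $R_1$ or $R_2$. Two small points to tidy in your write-up: in the abelianization step you must (and implicitly do) except $\mathrm{GL}_2(\FF_2) \cong S_3$, whose derived subgroup is proper in $\mathrm{SL}_2(\FF_2)$; and for $A_4 \cong \mathrm{AGL}_1(\FF_4)$ the full upper-triangular ring over $\FF_4$ has $36$ units, so you need the generalized triangular ring $\left(\begin{smallmatrix} \FF_4 & \FF_4 \\ 0 & \FF_2 \end{smallmatrix}\right)$ (or the paper's Hurwitz-quaternion example $\mathcal{O}/2\mathcal{O}$).
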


\begin{proof}
The trivial abelian cases of $S_1, S_2$ and $A_1, A_2, A_3$ are treated in Section~\ref{abelian cases}.  The well-known case of $S_3$ is discussed in Section~\ref{S_3 section}.  
An example of a ring with unit group isomorphic to $S_4$ is given in Theorem~\ref{S_4 theorem}.  The fact that $S_n$ does not occur as the unit group of a ring for any $n \geq 5$ is given in Theorem~\ref{unit group S_n}.  The fact that $A_n$ does not occur as the unit group of a ring for any $n \geq 5, n \neq 8$ is given in Theorem~\ref{unit group A_n}.    Two examples of rings with unit group isomorphic to $A_4$ are given in Section~\ref{A_4 case}. The classical result that $M_{4 \times 4}(\FF_2)$ has unit group isomorphic to $A_8$ is recalled in Theorem~\ref{A_8 theorem}.
\end{proof}

\subsection*{Notation and conventions}  Our rings are assumed unital but not necessarily commutative, and ring homomorphisms send $1$ to $1$.  Also, when we say $S$ is a subring of $R$, we include the assumption that 1 is the same in both rings.  For a ring $R$, we let $R^{\times}$ denote the unit group of $R$.  The groups $G$ considered in this paper will be finite.  For a group $G$ we let $Z(G)$ denote its center.   For $T$ a subset of a group $G$, we also write $T$ for the element of the group algebra
\[
\sum_{t \in T} t \in R[G].
\]
(Here $R$ will be understood from context; for us, $R$ is typically $\FF_2$.)  We also write $\langle T \rangle$ for the subgroup of $G$ generated by $T$.  We write $\iota$ for the identity element of $A_n$ or $S_n$.  When we discuss a normalizer $N_G(T)$ or a centralizer $Z_G(T)$, we do not necessarily assume that $T$ is a subgroup.  For example, $N_G(T)$ is the set of $g \in G$ such that $gTg^{-1} = T$; in particular, it is not necessarily the same as the normalizer of $\langle T \rangle$.  We write $D_n$ for the dihedral group of order $2n$.

\section{Unit groups with trivial center} \label{general section}

In this section, we describe some general results which will be applied to the special cases of alternating groups and symmetric groups in the following sections.  We hope to apply the results of this section to other groups in subsequent work.  Our motivating question is the following.  

\begin{Qn} \label{G question}
Let $G$ denote a group with trivial center.  Does there exist a ring with unit group isomorphic to $G$?
\end{Qn}

We begin with an easy exercise.

\begin{Prop} \label{trivial center implies characteristic 2}
Let $G$ denote a finite group with trivial center, and let $R$ denote a ring with unit group $R^{\times} \cong G$.  Then $R$ has characteristic 2.
\end{Prop}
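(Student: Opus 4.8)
The plan is to exploit the fact that every ring contains a distinguished central unit, namely $-1$. First I would observe that in any ring $R$ the element $-1$ is a unit, since it is its own inverse, and moreover it is \emph{central}: it commutes with every element of $R$. Consequently $-1$ lies not merely in $R^\times$ but in the center $Z(R^\times)$ of the multiplicative group of units.

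Next I would transport this observation across the given isomorphism $R^\times \cong G$. By hypothesis $Z(G)$ is trivial, so $Z(R^\times)$ is trivial as well. Since $-1 \in Z(R^\times)$, it must be the identity element of $R^\times$, that is, $-1 = 1$ in $R$. This is exactly the relation $1 + 1 = 0$, so $2 = 0$ in $R$ and $R$ has characteristic $2$. This mirrors the step in the opening example, where the absence of a unit of order $2$ forced $1 = -1$; here it is the absence of a \emph{central} unit of order $2$ (equivalently, the triviality of $Z(R^\times)$) that does the same work.

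Because the reasoning is so direct, there is no substantive obstacle to overcome; the proposition is genuinely the "easy exercise" the text advertises. The only point deserving a word of care is the degenerate case in which $R$ is the zero ring, where $1 = 0$ and the notion of characteristic $2$ must be read charitably; for the nontrivial groups $G$ that actually concern us this does not arise, and a nonzero ring satisfying $1 + 1 = 0$ has characteristic exactly $2$. I expect the real difficulty of the section to reside not in this statement but in the finer structural lemmas about $\FF_2[G]$ and its idempotents that will be built on top of it.
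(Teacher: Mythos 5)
Your argument is correct and is essentially identical to the paper's: both note that $-1$ is a central unit, hence lies in $Z(R^\times) \cong Z(G) = \{1\}$, forcing $-1 = 1$ and characteristic $2$. No further comment is needed.
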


\begin{proof}
The elements $1$ and $-1$ are units in $R$ and are in the center of $R$, hence are in the center of $R^{\times}$.  Hence $1 = -1$.
\end{proof}

The following reduces our Question~\ref{G question} into a question about finite rings.

\begin{Prop} \label{ideal exists}
Let $G$ denote a finite group with trivial center.
If there exists a ring with unit group isomorphic to $G$, then there exists a two-sided ideal $I \subseteq \FF_2[G]$ such that the quotient $\FF_2[G]/I$ has unit group isomorphic to $G$, and furthermore such that the natural composition
\[
G \subseteq \FF_2[G]^{\times} \rightarrow  (\FF_2[G]/I)^{\times} \cong G
\]
is the identity map.
\end{Prop}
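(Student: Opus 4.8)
The plan is to realize the desired quotient of $\FF_2[G]$ as the subring of $R$ generated by its units. First, by Proposition~\ref{trivial center implies characteristic 2} the ring $R$ has characteristic $2$, so it is naturally an $\FF_2$-algebra. Fix an isomorphism $\phi \colon G \xrightarrow{\sim} R^{\times}$. Since $\phi$ is a group homomorphism carrying the identity of $G$ to $1_R$ (the identity of the group $R^{\times}$ being the ring identity), the universal property of the group algebra produces a unital $\FF_2$-algebra homomorphism
\[
\Phi \colon \FF_2[G] \to R, \qquad \Phi\Big(\sum_{g} a_g\, g\Big) = \sum_g a_g\, \phi(g).
\]
I would then set $I = \ker \Phi$ and let $S = \Phi(\FF_2[G])$ be the image, a subring of $R$ (with the same $1$) isomorphic to $\FF_2[G]/I$ by the first isomorphism theorem. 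Note $S$ is finite, being a quotient of the finite ring $\FF_2[G]$; this is the reduction to finite rings promised before the statement.

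The key step is to check that $S$ and $R$ have the same unit group. On one hand, every unit of $R$ has the form $\phi(g) = \Phi(g)$ for some $g \in G$ and hence lies in $S$, so $R^{\times} \subseteq S$; moreover, for $u \in R^{\times}$ the inverse $u^{-1}$ is again a unit of $R$ and so also lies in $S$, whence $u$ is already invertible inside $S$. Thus $R^{\times} \subseteq S^{\times}$. On the other hand, any element invertible in the subring $S$ is a fortiori invertible in $R$, so $S^{\times} \subseteq R^{\times}$. Combining these gives $S^{\times} = R^{\times} = \phi(G) \cong G$, and transporting along $S \cong \FF_2[G]/I$ yields $(\FF_2[G]/I)^{\times} \cong G$.

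Finally, the identity-map assertion is automatic from this construction. The natural map $G \subseteq \FF_2[G]^{\times} \to (\FF_2[G]/I)^{\times}$ sends $g$ to its class $g + I$, which corresponds under $S \cong \FF_2[G]/I$ to $\Phi(g) = \phi(g) \in S^{\times}$. Choosing the final identification $(\FF_2[G]/I)^{\times} \cong G$ to be precisely the composite $(\FF_2[G]/I)^{\times} \cong S^{\times} = \phi(G) \xrightarrow{\phi^{-1}} G$, one sees that $g \mapsto \phi(g) \mapsto g$, so the composition is the identity. I do not expect a genuine obstacle here; the only point demanding care is the middle paragraph, where it is essential to obtain the equality $S^{\times} = R^{\times}$ rather than a mere inclusion, i.e. to confirm that passing to the subring generated by the units neither discards an existing unit nor manufactures a new one.
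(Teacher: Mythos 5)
Your proposal is correct and follows essentially the same route as the paper: both construct the algebra map $\FF_2[G]\to R$ induced by the chosen isomorphism $G\cong R^{\times}$, take $I$ to be its kernel, and identify the unit group of the image with $R^{\times}$ (the paper merely passes through $\ZZ[G]$ before invoking characteristic $2$, which is cosmetic). Your middle paragraph in fact spells out more carefully than the paper does why no unit is lost or gained in passing to the subring, which is a welcome bit of extra care.
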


\begin{proof}
Let $R$ denote a ring with unit group isomorphic to $G$, and fix an isomorphism $R^{\times} \cong G$.
There exists a unique homomorphism 
\[
\varphi: \ZZ[G] \rightarrow R,
\]
such that the induced map
\[
\varphi: G \rightarrow \ZZ[G]^{\times} \rightarrow R^{\times} \cong G
\]
is the identity map. Because $G$ has trivial center, by Proposition~\ref{trivial center implies characteristic 2}, we know $R$ has characteristic $2$.  Hence our homomorphism $\varphi$ factors through a homomorphism
\[
\varphi: \FF_2[G] \rightarrow R.
\]
Let $R'$ denote the image of $\varphi$.  Because $R'$ is a subring of $R$, we know that the unit group of $R'$ is a subgroup of $G$.  On the other hand, we checked above that the image of $\varphi$ contains $G$.  Hence the unit group of $R'$ is equal to $G$.  This completes the proof.
\end{proof}

Our approach to Question~\ref{G question} will be to consider the restrictions on an ideal $I \subseteq \FF_2[G]$ as described in Proposition~\ref{ideal exists}. 

\begin{Hyp}
Throughout this section, let $G$ denote a finite group with trivial center and let $I$ denote an ideal as in Proposition~\ref{ideal exists}.  We also write $\varphi$ for the natural map $\FF_2[G] \rightarrow \FF_2[G]/I$.  
\end{Hyp}

\begin{Def}
The \emph{weight} of an element $x \in \FF_2[G]$ is the number of non-zero coefficients that appear in the expression
\[
x = \sum_{g \in G} a_{g} g \qquad (a_{g} \in \FF_2).
\]
\end{Def}

\begin{Lem} \label{weight 2 is impossible}
The ideal $I$ contains no elements of weight 2.
\end{Lem}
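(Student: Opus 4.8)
The plan is to reduce the statement to the injectivity of $G$ inside the quotient ring and then exploit characteristic $2$. Since all coefficients lie in $\FF_2$, an element of weight $2$ necessarily has the form $g + h$ for two \emph{distinct} elements $g, h \in G$; so what I must show is that $g + h \notin I$ whenever $g \neq h$. Because $I = \ker \varphi$, this is the same as showing $\varphi(g + h) \neq 0$ in $\FF_2[G]/I$.

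First I would extract the key consequence of Proposition~\ref{ideal exists}: by construction the composition $G \to (\FF_2[G]/I)^{\times} \cong G$ is the identity map. In particular the map $\varphi|_G \colon G \to (\FF_2[G]/I)^{\times}$ has a left inverse, hence is injective, so $\varphi(g) \neq \varphi(h)$ for distinct $g, h \in G$. Now, Proposition~\ref{trivial center implies characteristic 2} tells us the quotient has characteristic $2$, so $\varphi(g+h) = \varphi(g) + \varphi(h) = \varphi(g) - \varphi(h)$, and this difference is nonzero precisely because $\varphi(g) \neq \varphi(h)$. Therefore $g + h \notin I$, which is exactly the claim.

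There is no serious obstacle here; the one point that must be got right is recognizing that the compatibility clause of Proposition~\ref{ideal exists} is doing real work, namely forcing $\varphi|_G$ to be injective (not merely to be some endomorphism of $G$), and that the characteristic-$2$ hypothesis is what identifies the sum $\varphi(g)+\varphi(h)$ appearing in a weight-$2$ element with the difference $\varphi(g)-\varphi(h)$ that injectivity controls. I would also note, for later use, that multiplying by $g^{-1}$ on the left (permissible since $I$ is a two-sided ideal and $g \in \FF_2[G]^{\times}$) shows that ruling out weight-$2$ elements is equivalent to the normalized statement that $1 + k \notin I$ for every $k \neq \iota$; this reformulation tends to be the convenient one in subsequent weight arguments.
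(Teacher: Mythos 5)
Your proof is correct and follows essentially the same route as the paper's: use the compatibility clause of Proposition~\ref{ideal exists} to get injectivity of $\varphi|_G$, and use characteristic $2$ to convert the sum $\varphi(g)+\varphi(h)$ into the difference that injectivity controls. The concluding remark about normalizing to $1+k$ is a harmless (and potentially useful) addition not present in the paper.
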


\begin{proof}
We prove that if $g + h \in I$, then $g = h$; this implies that $I$ contains no weight 2 elements.
If $g + h \in I$, then $\varphi(g) = - \varphi(h)$.  Because our ring is characteristic $2$, this implies $\varphi(g) = \varphi(h)$.  Because we have assumed that the restriction of $\varphi$ to $G$ is injective, this can only happen if $g = h$.
\end{proof}

\begin{Lem} \label{congruent to sigma}
Let $x \in \FF_2[G]$ denote a unit.  Then there exists $\sigma_x \in G$ such that $x + \sigma_x \in I$.
\end{Lem}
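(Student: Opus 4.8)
The plan is to push $x$ through the quotient map $\varphi$ and observe that its image must coincide with the image of a group element. First I would note that, since $x$ is a unit in $\FF_2[G]$ with inverse $x^{-1}$, applying the ring homomorphism $\varphi$ gives $\varphi(x)\varphi(x^{-1}) = \varphi(1) = 1$, so $\varphi(x)$ is a unit in the quotient $\FF_2[G]/I$.

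Next I would identify the unit group of $\FF_2[G]/I$ explicitly. By Proposition~\ref{ideal exists}, the composition $G \subseteq \FF_2[G]^{\times} \to (\FF_2[G]/I)^{\times} \cong G$ is the identity; in particular the map $\sigma \mapsto \varphi(\sigma)$ from $G$ into $(\FF_2[G]/I)^{\times}$ is injective. Since $G$ and $(\FF_2[G]/I)^{\times} \cong G$ are finite of the same cardinality, this injection is in fact a bijection onto the unit group. Hence every unit of $\FF_2[G]/I$ is of the form $\varphi(\sigma)$ for some (unique) $\sigma \in G$.

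Combining these two observations, since $\varphi(x)$ is a unit there is some $\sigma_x \in G$ with $\varphi(x) = \varphi(\sigma_x)$, so that $\varphi(x - \sigma_x) = 0$, i.e.\ $x - \sigma_x \in \ker \varphi = I$. Because $\FF_2[G]$ has characteristic $2$ we have $x - \sigma_x = x + \sigma_x$, and therefore $x + \sigma_x \in I$, as desired.

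I expect no serious obstacle here; the only point needing care is justifying that the unit group of the quotient is \emph{exactly} the image of $G$, rather than something strictly larger. This is precisely where the finiteness of $G$ together with the identity-map conclusion of Proposition~\ref{ideal exists} is used, forcing $\sigma \mapsto \varphi(\sigma)$ to be surjective onto $(\FF_2[G]/I)^{\times}$.
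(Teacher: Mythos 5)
Your proposal is correct and follows the same route as the paper's own proof: push $x$ into the quotient, note its image is a unit and hence equals $\varphi(\sigma_x)$ for some $\sigma_x \in G$ because the unit group of the quotient is exactly the image of $G$, then use characteristic $2$ to replace the difference by a sum. The paper states these steps more tersely; your added justification that $\sigma \mapsto \varphi(\sigma)$ is a bijection onto $(\FF_2[G]/I)^{\times}$ is exactly the point the paper leaves implicit.
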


\begin{proof}
This follows from the following remarks:
\begin{itemize}
\item $\varphi(x)$ is a unit and so $\varphi(x) = \varphi(\sigma_x)$ for some $\sigma_x \in G$;
\item $x \equiv \sigma_x \bmod I$ for some $\sigma_x \in G$;
\item $x + \sigma_x = x - \sigma_x$ because our ring is characteristic 2.
\end{itemize}
\end{proof}

\begin{Prop} \label{centralizer of normalizer}
Let $T \in \FF_2[G]$ denote a unit, which we identify with a subset $T \subseteq G$.  The element $\sigma_T$ described in Lemma~\ref{congruent to sigma} is in the centralizer of $N_{G}(T)$ in $G$, where $N_{G}(T)$ is the normalizer of $T$ in $G$.  
\end{Prop}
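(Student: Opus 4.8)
The plan is to exploit the fact that conjugation by any element of $G$ is a ring automorphism of $\FF_2[G]$ that carries the two-sided ideal $I$ into itself. Fix an arbitrary $g \in N_G(T)$; the goal is to show $g \sigma_T g^{-1} = \sigma_T$, which is exactly the assertion that $\sigma_T$ lies in the centralizer $Z_G(N_G(T))$.

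First I would record the relevant automorphism. Since $g \in G \subseteq \FF_2[G]^{\times}$, the map $c_g \colon x \mapsto g x g^{-1}$ is a ring automorphism of $\FF_2[G]$. Because $I$ is a \emph{two-sided} ideal and $g, g^{-1}$ are units, we have $c_g(I) \subseteq I$ (indeed $c_g(I) = I$, with inverse $c_{g^{-1}}$). Thus $c_g$ sends elements of $I$ to elements of $I$. Next I would compute the action of $c_g$ on the group-algebra element $T = \sum_{t \in T} t$; here the identification of the unit $T$ with a subset of $G$ is essential. We have $c_g(T) = \sum_{t \in T} g t g^{-1} = \sum_{t' \in gTg^{-1}} t'$, and the hypothesis $g \in N_G(T)$ means precisely that $gTg^{-1} = T$ as subsets, whence $c_g(T) = T$ as elements of $\FF_2[G]$. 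This is the single place where the setwise normalizer condition (rather than the normalizer of $\langle T \rangle$) is used.

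Now I would apply $c_g$ to the congruence $T + \sigma_T \in I$ furnished by Lemma~\ref{congruent to sigma}. By the two observations above, $c_g(T + \sigma_T) = T + g\sigma_T g^{-1} \in I$. Adding this to the original relation $T + \sigma_T \in I$ and using that $\FF_2[G]$ has characteristic $2$ (so the $T$ terms cancel) yields $\sigma_T + g\sigma_T g^{-1} \in I$. This is a sum of two group elements, so by Lemma~\ref{weight 2 is impossible} (whose proof shows $a + b \in I$ with $a,b \in G$ forces $a = b$) we conclude $g\sigma_T g^{-1} = \sigma_T$. As $g \in N_G(T)$ was arbitrary, $\sigma_T$ centralizes $N_G(T)$, which is the claim.

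I do not expect a genuine obstacle here: the argument is a short manipulation of two congruences. The only points demanding care are the bookkeeping between $T$ viewed as a subset and $T$ viewed as a group-algebra element, and the observation that it is exactly the two-sidedness of $I$ that makes conjugation compatible with passage to the quotient $\FF_2[G]/I$.
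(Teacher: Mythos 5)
Your proof is correct and is essentially the paper's own argument in a lightly different dress: the paper multiplies $T+\sigma_T$ by $g$ on the left and on the right and adds, using $gT = Tg$ for $g \in N_G(T)$ to cancel the $T$ terms, whereas you conjugate by $g$ and add to the original relation; both reduce to the weight-$2$ lemma to force $g\sigma_T g^{-1} = \sigma_T$. No substantive difference.
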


\begin{proof}
Because $I$ is a two-sided ideal, for any $g \in G$ we have
\begin{align*}
g(T + \sigma_T) &\in I \\
(T + \sigma_T) g &\in I.\\
\intertext{In particular, taking $g \in N_{G}(T)$ and adding these last two elements, we find}
g T + Tg + g\sigma_T + \sigma_T g &\in I\\
g\sigma_T + \sigma_T g &\in I.
\end{align*}
By Lemma \ref{weight 2 is impossible}, the elements $g\sigma_T$ and $\sigma_T g$ cannot be distinct elements of $G$.  Hence $g$ and $\sigma_T$ commute.  Because $g \in N_{G}(T)$ was arbitrary, we deduce that $\sigma_T$ is in the centralizer of $N_{G}(T)$ in $G$, as required.
\end{proof}

We are now ready to apply these general results to some specific groups.

\section{An example: unit group $S_3$} \label{S_3 section} 

There is a well-known ring with unit group isomorphic to $S_3$, namely, the matrix ring $M_{2 \times 2}(\FF_2)$.  In this section, we apply the general techniques of the previous section to the group $S_3$ as a way of illustrating our approach.  

The symmetric group $S_3$ has trivial center, and so the results of Section~\ref{general section} all apply in the case $G \cong S_3$.  We consider the restrictions on an ideal $I \subseteq \FF_2[S_3]$ such that 
\[
(\FF_2[S_3]/I)^{\times} \cong S_3,
\]
and such that furthermore the induced map 
\[
S_3 \rightarrow \FF_2[S_3]^{\times} \rightarrow (\FF_2[S_3]/I)^{\times} \cong S_3
\] 
is the identity map.  

Consider the element 
\[
H_1 := \sum_{\sigma \in S_3} \sigma \in \FF_2[S_3]
\]
corresponding to the full subgroup $S_3$.  It is easy to check that $H_1^2 = 0$ and that $(H_1 + \iota)^2 = \iota$.  Hence $H_1 + \iota$ is a unit in $\FF_2[S_3]$; abbreviate this unit by $T$.  By Lemma~\ref{congruent to sigma}, there must exist an element $\sigma_T \in S_3$ such that $T+ \sigma_T \in I$.  Because the normalizer of $T = S_{3} \setminus \{\iota\}$ in $S_3$ is the full group $S_3$, by Proposition~\ref{centralizer of normalizer}, we must have $\sigma_{T} = \iota$, and hence $T + \iota \in I$, and hence $H_1 \in I$.  
The reader may check that the $32$-element ring
$\FF_2[S_3]/(H_1)$
has unit group isomorphic to $S_3$.  

Let $\tau \in S_3$ denote a $3$-cycle, and let $H_2 := \iota + \tau + \tau^2$.  Then $(H_2) = (H_1, H_2)$, and the reader may check that the $16$-element ring $\FF_2[S_3]/(H_2)$ is isomorphic to $M_{2 \times 2}(\FF_2)$.  Hence $\FF_2[S_3]/(H_2)$ is another example of a ring with unit group isomorphic to $S_3$.

\section{Unit group $S_n$}

Having analyzed the case of $S_3$ in the previous section, we postpone the case of $S_4$ and turn our attention to $S_n$ for $n \geq 5$.  These groups have trivial center, so again the results of Section~\ref{general section} apply.  Our goal is to prove the following theorem.  

\begin{Thm} \label{unit group S_n}
There does not exist a ring with unit group isomorphic to $S_n$ for any $n \geq 5$.  
\end{Thm}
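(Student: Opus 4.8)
The plan is to mimic the $S_3$ analysis: exhibit an explicit unit $T \in \FF_2[S_n]$ whose normalizer $N_G(T)$ is large enough to force $\sigma_T$ (via Proposition~\ref{centralizer of normalizer}) to lie in the center of $G$, hence to equal $\iota$; this pins down a specific element $T + \iota$ of $I$, and by iterating we hope to force $I$ to be so large that $(\FF_2[S_n]/I)^\times$ cannot be all of $S_n$. The engine is Proposition~\ref{centralizer of normalizer}: if $T \subseteq G$ is a unit in the group algebra and $N_G(T) = G$, then $\sigma_T \in Z(G) = \{\iota\}$, so $T + \iota \in I$.

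\emph{First} I would look for subsets $T \subseteq S_n$ that are (a) \emph{normal}, i.e. closed under conjugation, so that $N_G(T) = G$, and (b) units in $\FF_2[S_n]$. A natural source of normal subsets is unions of conjugacy classes; a natural source of units is elements built from subgroups $H \le S_n$, since for a subgroup the class sum $H = \sum_{h \in H} h$ satisfies $H^2 = |H| \cdot H$, which in characteristic $2$ collapses to $H^2 = 0$ when $|H|$ is even, making $H + \iota$ a unit of order dividing $2$. The cleanest candidate, exactly as in the $S_3$ case, is $H_1 = \sum_{\sigma \in S_n}\sigma$, the sum over the whole group: it is manifestly conjugation-invariant, so $N_G(H_1 + \iota) = G$, and $H_1^2 = |S_n| H_1 = 0$ in $\FF_2$ for $n \ge 2$, so $T := H_1 + \iota$ is a unit with $T^2 = \iota$. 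Proposition~\ref{centralizer of normalizer} then gives $\sigma_T = \iota$ and hence $H_1 \in I$.

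\emph{Next}, having $H_1 \in I$ as a free starting relation, I would feed in more units coming from subgroups whose normalizers are still controllable but whose associated $\sigma_T$ is now forced to be a \emph{specific nontrivial} element rather than the identity; working modulo the relations already collected, these produce new weight-$2$-avoiding congruences $T + \sigma_T \in I$ that I can combine and reduce. The aim is to show that the constraints accumulate until $\FF_2[S_n]/I$ is forced to be a quotient so small (for instance a product of small matrix algebras and fields over $\FF_2$, whose unit group is then a product of groups $GL_k(\FF_2)$ and is too small or of the wrong structure to contain $S_n$ as its full unit group) that no admissible $I$ can realize $S_n^\times$ for $n \ge 5$. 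The group-theoretic facts I expect to exploit are special to $n \ge 5$: that $A_n$ is simple, that $S_n$ has very few normal subgroups ($\{\iota\}, A_n, S_n$), and that centralizers of large subsets are trivial, so that Proposition~\ref{centralizer of normalizer} repeatedly forces $\sigma_T$ into a tiny set.

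\emph{The hard part} will be turning the accumulated membership relations in $I$ into a genuine contradiction with $(\FF_2[S_n]/I)^\times \cong S_n$: knowing several elements lie in $I$ does not by itself bound the unit group of the quotient, and one must argue either that $I$ is forced to contain an element witnessing a collapse of two distinct group elements (contradicting injectivity of $\varphi|_G$, via Lemma~\ref{weight 2 is impossible}), or that the semisimple quotient $\FF_2[S_n]/\mathrm{rad}$ compatible with these relations has a unit group of order strictly different from $|S_n|$. Managing the noncommutative/modular representation theory of $\FF_2[S_n]$ — in particular controlling the Wedderburn-type components and their unit counts in characteristic $2$ — is where the real work lies, and I anticipate needing a clever choice of a single well-chosen unit $T$ (perhaps attached to a Sylow $2$-subgroup or to a transitive elementary abelian subgroup) whose forced relation alone suffices, rather than a long inductive accumulation.
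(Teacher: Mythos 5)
Your framework is the right one---the paper's proof does run exactly through Lemma~\ref{congruent to sigma}, Proposition~\ref{centralizer of normalizer}, and Lemma~\ref{weight 2 is impossible}---but there is a genuine gap: you never produce the unit that actually forces a weight-$2$ element into $I$, and the candidate you do propose cannot do the job. Your starting unit $T = H_1 + \iota$ (with $H_1$ the full group sum) forces only $H_1 \in I$, which is an element of weight $n!$ and is perfectly consistent with $(\FF_2[S_n]/I)^\times \cong S_n$; indeed the analogous relation holds in the working examples for $S_3$ and $S_4$, so no amount of ``accumulating'' relations of this shape will yield a contradiction by itself. The mechanism you need is different in a small but essential way: you want a unit $T$ of weight exactly $3$ whose support contains $\iota$, so that once $\sigma_T$ is forced to equal $\iota$, the element $T + \sigma_T \in I$ has weight $2$ and Lemma~\ref{weight 2 is impossible} kills it immediately. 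You acknowledge this is ``the hard part'' but then gesture at Wedderburn components and semisimple quotients, which is not where the paper goes at all.

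The missing idea is the specific choice $T = \iota + \tau^2 + \tau^3$ for the $5$-cycle $\tau = (12345)$. This is a unit (of order $3$) because $\FF_2[\tau] \cong \FF_2 \times \FF_{2^4}$ and $\FF_{2^4}^\times$ has order $15$; its support $\{\iota, \tau^2, \tau^3\}$ has normalizer $D_5 \times S_{n-5}$ in $S_n$, whose centralizer is $Z(S_{n-5})$. For $n \geq 5$ this forces $\sigma_T = \iota$ except in the single case $n = 7$, $\sigma_T = (67)$, which the paper dispatches by raising $T + \sigma_T$ to the $16$th power (Frobenius in characteristic $2$, using that $\tau$ has order $5$ and $\sigma_T$ order $2$) to again land on the weight-$2$ element $\tau^2 + \tau^3 \in I$. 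Note also that none of the global structural facts you invoke (simplicity of $A_n$, the normal subgroup lattice of $S_n$, modular representation theory of $\FF_2[S_n]$) are needed or used; the entire proof is the one local computation with a $5$-cycle. Without that choice of $T$, your proposal is a plan rather than a proof.
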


\begin{proof}
By way of contradiction, we suppose that we have a ring with unit group isomorphic to $S_n$.  Let $I \subseteq \FF_2[S_n]$ denote an ideal satisfying the hypotheses of Proposition~\ref{ideal exists}.  Our goal is to produce an element of weight 2 in the ideal $I$ and thus reach a contradiction.  

Let $\tau = (12345)$ and consider the element $T := \iota + \tau^2 + \tau^3 \in \FF_2[S_n]$.  The fact that $T$ is a unit of order 3 and with inverse $1 + \tau + \tau^4$ is readily verified\footnote{The existence of such an order 3 unit T is explained as follows.  By the Chinese Remainder Theorem,  $\FF_2[\tau] \cong \FF_2 \times \FF_{2^4}$.  The unit group of $\FF_{2^4}$ is cyclic of order 15 and hence $\FF_2[\tau]^{\times}$ has a cyclic subgroup of order 3.}.   
By Lemma~\ref{congruent to sigma}, there exists some $\sigma \in S_n$ such that $\iota + \tau^2 + \tau^3 + \sigma \in I$.  By Proposition~\ref{centralizer of normalizer}, the element $\sigma$ must be in the centralizer of the normalizer of $\{\iota, \tau^2, \tau^3\}$ in $S_n$.  One may check that the normalizer of $\{\iota, \tau^2, \tau^3\}$ in $S_n$ is $D_{5} \times S_{n-5}$ and that the centralizer of $D_{5} \times S_{n-5}$ is $Z(S_{n-5})$.  

Thus $\sigma \in Z(S_{n-5})$.  If $\sigma = \iota$, then $\iota + \tau^2 + \tau^3 + \iota = \tau^2 + \tau^3 \in I$ is a weight 2 element in $I$, which is not allowed.  The only remaining case is $n = 7$ and $\sigma = (67)$.  
Let $T = \iota + \tau^2 + \tau^3 + \sigma \in I$.  Raising both sides to the $16$-th power, we find that
\[
T^{16} = \iota^{16} + (\tau^2)^{16} + (\tau^3)^{16} + \sigma^{16} \in I.
\]
(We used here that $\tau$ and $\sigma$ commute, and that our base ring has characteristic 2.)
Because $\tau$ has order five and $\sigma$ has order two, we find
\[
T^{16} = \iota + \tau^2 + \tau^3 + \iota = \tau^2 + \tau^3 \in I, 
\]
which is a contradiction.  This completes the proof that there are no rings with unit group isomorphic to $S_n$, for $n \geq 5$.  
\end{proof}

\section{Unit group $A_n$}

The methods of the previous section carry over directly to the case of the alternating groups $A_n$.  The only substantive difference is that our proof breaks down in the case $A_8$, essentially because $A_{8-5} = A_3$ is abelian.  This is to be expected, though, because as we will see in Theorem~\ref{A_8 theorem}, the ring $M_{4 \times 4}(\FF_2)$ has unit group isomorphic to $A_8$.

\begin{Thm} \label{unit group A_n}
There does not exist a ring with unit group isomorphic to $A_n$ for any $n \geq 5, n \neq 8$.  
\end{Thm}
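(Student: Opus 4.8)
The plan is to mimic the proof of Theorem~\ref{unit group S_n} almost verbatim, the payoff being that the alternating-group analogue is actually cleaner. Suppose for contradiction that some ring has unit group isomorphic to $A_n$, and fix an ideal $I \subseteq \FF_2[A_n]$ as furnished by Proposition~\ref{ideal exists}. Since the $5$-cycle $\tau = (12345)$ is even it lies in $A_n$, and the element $T := \iota + \tau^2 + \tau^3$ is the same order-$3$ unit used before (the footnote's Chinese Remainder Theorem argument applies unchanged inside $\FF_2[\tau]$). By Lemma~\ref{congruent to sigma} there is $\sigma \in A_n$ with $T + \sigma \in I$, and by Proposition~\ref{centralizer of normalizer} the element $\sigma$ lies in the centralizer in $A_n$ of $N := N_{A_n}(\{\iota, \tau^2, \tau^3\})$. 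As soon as one shows $\sigma = \iota$, the element $T + \iota = \tau^2 + \tau^3 \in I$ has weight $2$, contradicting Lemma~\ref{weight 2 is impossible}; so the whole argument reduces to computing this centralizer.

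The first group-theoretic step is to identify $N$. Exactly as in the $S_n$ case, an element normalizing $\{\tau^2, \tau^3\} = \{\tau^2, \tau^{-2}\}$ must preserve the support $\{1,\ldots,5\}$ of these $5$-cycles and act on it through the dihedral group $D_5$, so $N_{S_n}(\{\iota,\tau^2,\tau^3\}) = D_5 \times S_{n-5}$. The key new observation is a parity count: the rotations in this $D_5$ are powers of the even permutation $\tau$, while each reflection fixes one of the five points and transposes the remaining two pairs (for instance $(25)(34)$), hence is a product of two transpositions and is likewise even. Thus $D_5 \subseteq A_5$, and intersecting with $A_n$ keeps the full $D_5$ but only the even part of $S_{n-5}$, giving
\[
N = N_{A_n}(\{\iota,\tau^2,\tau^3\}) = D_5 \times A_{n-5}.
\]

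The decisive step is to compute $Z_{A_n}(N)$, and here one must be careful not to simply intersect the $S_n$-answer: because $N$ is a proper subgroup of $D_5 \times S_{n-5}$ its centralizer could be larger. I would compute directly. An element commuting with $\tau$ lies in $\langle \tau \rangle \times S_{n-5}$; commuting in addition with a reflection of $D_5$ forces the $\langle \tau\rangle$-component to be trivial (as $2$ is invertible mod $5$); and commuting with $A_{n-5}$ then forces the element into $Z_{S_{n-5}}(A_{n-5})$. Hence $Z_{S_n}(N) = Z_{S_{n-5}}(A_{n-5})$ and $Z_{A_n}(N) = Z_{S_{n-5}}(A_{n-5}) \cap A_n$. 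The centralizer $Z_{S_m}(A_m)$ is trivial for $m \geq 4$ and for $m \leq 1$, equals $\langle (67) \rangle$ for $m = 2$, and equals $\langle (678)\rangle$ for $m = 3$. Writing $m = n - 5$ and intersecting with $A_n$, the transposition in the $m=2$ case (that is, $n = 7$) is odd and therefore dies, so $Z_{A_n}(N)$ is trivial for every $n \geq 5$ except $n = 8$, where the even $3$-cycle $(678)$ survives and $Z_{A_8}(N) \cong C_3$.

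Combining these computations, for $n \geq 5$ with $n \neq 8$ we conclude $\sigma \in Z_{A_n}(N) = \{\iota\}$, producing the forbidden weight-$2$ element $\tau^2 + \tau^3 \in I$ and the desired contradiction. I expect the centralizer computation of the previous paragraph to be the only real obstacle: the subtlety is that passing from $S_n$ to $A_n$ both shrinks the normalizer $N$ (so its centralizer must be recomputed rather than inherited) and restricts the ambient group to even permutations. It is precisely the interplay of these two effects that makes $A_7$ succeed where $S_7$ required the extra $16$th-power trick --- the would-be central element $(67)$ is now odd --- while isolating $n = 8$, where $A_{n-5} = A_3$ is abelian, as the genuine exception corresponding to $M_{4\times 4}(\FF_2)$.
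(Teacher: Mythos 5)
Your argument is correct and follows the paper's proof of this theorem essentially verbatim: the same unit $\iota+\tau^2+\tau^3$, the same identification $N_{A_n}(\{\iota,\tau^2,\tau^3\})=D_5\times A_{n-5}$, and the same conclusion that its centralizer in $A_n$ is trivial for $n\geq 5$, $n\neq 8$, forcing the forbidden weight-$2$ element $\tau^2+\tau^3\in I$. The only difference is that you supply the normalizer and centralizer computations that the paper leaves to the reader, and your case analysis of $Z_{S_m}(A_m)$ correctly isolates $n=8$ as the exception.
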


\begin{proof}
The proof is very similar to the proof of Theorem~\ref{unit group S_n}, so we focus only on the main steps.  Let $I \subseteq \FF_2[A_n]$ denote an ideal satisfying the hypotheses of Proposition~\ref{ideal exists}.  Because the $5$-cycle $\tau = (12345)$ is in $A_n$ for any $n \geq 5$, we again have a unit $\iota + \tau^2 + \tau^3$, and we again wish to consider possible values of $\sigma \in A_n$ such that $\iota + \tau^2 + \tau^3 + \sigma \in I$.  
One may check that the normalizer of $\{\iota, \tau^2, \tau^3\}$ in $A_n$ is $D_5 \times A_{n-5}$.  If $n \geq 5, n \neq 8$, then the centralizer of this subgroup in $A_n$ is trivial, and hence $\sigma = \iota$, and we are finished as before.  
\end{proof}

\begin{Rmk}
If $n = 8$, then the element $\sigma$ described in the previous proof should be in the centralizer of $D_5 \times A_3$; this centralizer is a cyclic group of order $3$.  In the proof of Theorem~\ref{unit group S_n}, we at one point considered $\sigma^{16}$.  In the $S_n$ case, we were able to prove that $\sigma^{16}$ was always trivial.  In the $A_8$ case, $\sigma$ may have order $3$, and so the proof breaks down, as it should because $\left(M_{4 \times 4}(\FF_2)\right)^{\times} \cong A_8$; see Theorem~\ref{A_8 theorem} below.  
\end{Rmk}

\section{Unit group $S_4$}

The only remaining nonabelian symmetric group to consider is $S_4$.  We describe rings with unit group isomorphic to $S_4$ in this section.  We first need some results similar to the results in Section~\ref{general section}.

\begin{Lem}
Let $H \subseteq S_n$ denote a subgroup of even order.  Then $H^2 = 0 \in \FF_2[S_n]$.  (Recall our convention that we write $H$ for the element $\sum_{h \in H} h \in \FF_2[S_n]$.)
\end{Lem}

\begin{proof}
We have
\[
H^2 = |H| \cdot H = 0,
\]
because $|H|$ is even.
\end{proof}

\begin{Lem} \label{even unit lemma}
Let $H \leq S_n$ denote a subgroup of even order as above.  The element $H + \iota$ is a unit in $\FF_2[S_n]$.  
\end{Lem}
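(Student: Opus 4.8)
The plan is to exhibit an explicit inverse rather than argue abstractly: I would show that $H + \iota$ is its own inverse, i.e. that $(H + \iota)^2 = \iota$ in $\FF_2[S_n]$. The key input is the preceding lemma, which gives $H^2 = 0$ because $|H|$ is even. With that in hand the computation is immediate.

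Concretely, I would expand the square in the group algebra, using that $\iota$ is the multiplicative identity and that the base field has characteristic $2$:
\[
(H + \iota)^2 = H^2 + H\iota + \iota H + \iota^2 = H^2 + 2H + \iota.
\]
The cross terms collapse to $2H = 0$ in characteristic $2$, and $H^2 = 0$ by the previous lemma, leaving $(H + \iota)^2 = \iota$. Hence $H + \iota$ is a unit, equal to its own inverse, which is exactly the claim.

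There is essentially no obstacle to overcome here: all of the genuine content has already been absorbed into the computation $H^2 = |H| \cdot H = 0$, whose only hypothesis is that $|H|$ is even. The present lemma is just the formal consequence that $1 + (\text{nilpotent of square zero})$ is a unit in any ring of characteristic $2$, specialized to the nilpotent element $H$. The one point worth stating carefully is that $2H$ vanishes not because $H$ itself is zero but because the coefficient $2$ is zero in $\FF_2$, so that the argument uses the characteristic hypothesis twice over (once for the cross terms and, via the earlier lemma, once for $H^2$).
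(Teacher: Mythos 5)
Your proof is correct and is exactly the paper's argument: the paper's proof simply states ``One checks that $(H + \iota)^2 = \iota$,'' and your expansion $(H+\iota)^2 = H^2 + 2H + \iota = \iota$, using $H^2 = 0$ from the preceding lemma and characteristic $2$ for the cross terms, is precisely that check made explicit.
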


\begin{proof}
One checks that $(H + \iota)^2 = \iota$.
\end{proof}

\begin{Prop} \label{S_4 ideal}
Let $R$ denote a ring with unit group isomorphic to $S_4$, and let $I \subseteq \FF_2[S_4]$ denote an ideal as in Proposition~\ref{ideal exists}.  
\begin{enumerate}
\item \label{S_3 part} The ideal $I$ contains an isomorphic copy of $S_3$ (and hence all 4 isomorphic copies of $S_3$) inside of $S_4$.  
\item \label{weight 3 part} The ideal $I$ contains either
\[
\iota + (24) + (12)(34) + (1234)
\]
or
\[
\iota + (24) + (12)(34) + (1432).
\]
\end{enumerate}
\end{Prop}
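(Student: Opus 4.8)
The plan is to prove both parts with the machinery of Section~\ref{general section}: exhibit a unit in $\FF_2[S_4]$, apply Lemma~\ref{congruent to sigma} to attach a group element $\sigma$ to it, and then pin down $\sigma$ using Proposition~\ref{centralizer of normalizer} together with the weight-$2$ restriction of Lemma~\ref{weight 2 is impossible}. This is exactly the strategy carried out for $S_3$ in Section~\ref{S_3 section}, and I would follow it closely.

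For part~\ref{S_3 part} I would mimic the $S_3$ computation verbatim. Take $H$ to be the point stabilizer $S_{\{1,2,3\}} \le S_4$, an order-$6$ subgroup isomorphic to $S_3$. By Lemma~\ref{even unit lemma} the element $H + \iota$ is a unit, and as a subset of $S_4$ it is $H \setminus \{\iota\}$. Since $\iota$ is fixed by conjugation, $N_{S_4}(H \setminus \{\iota\}) = N_{S_4}(H)$, and the normalizer of a point stabilizer is that stabilizer itself, so this equals $H$. A short check shows the centralizer of $H \cong S_3$ in $S_4$ is trivial: commuting with the transpositions $(12)$ and $(13)$ forces $\{\sigma 1,\sigma 2\}=\{1,2\}$ and $\{\sigma 1,\sigma 3\}=\{1,3\}$, whence $\sigma$ fixes $1,2,3$ and hence $4$. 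By Proposition~\ref{centralizer of normalizer} the element attached to $H+\iota$ lies in this trivial centralizer, so it is $\iota$, giving $H = (H+\iota)+\iota \in I$. As the four point stabilizers are conjugate and $I$ is two-sided, all four copies of $S_3$ lie in $I$.

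For part~\ref{weight 3 part} the key observation is that the weight-$3$ element $T := \iota + (24) + (12)(34)$ is already a unit: a direct expansion in characteristic $2$ gives $T^2 = \iota + (1234) + (1432)$, and squaring once more gives $T^4 = \iota$, so $T$ has order $4$. Lemma~\ref{congruent to sigma} then produces $\sigma \in S_4$ with $T + \sigma \in I$, and the goal is to show $\sigma \in \{(1234),(1432)\}$. To constrain $\sigma$ I would compute $N_{S_4}(T)$ with $T$ regarded as the subset $\{\iota,(24),(12)(34)\}$; because $(24)$ and $(12)(34)$ have distinct cycle types they cannot be interchanged by conjugation, so $N_{S_4}(T) = Z_{S_4}((24)) \cap Z_{S_4}((12)(34)) = \{\iota,(13)(24)\}$. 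Proposition~\ref{centralizer of normalizer} then places $\sigma$ in $Z_{S_4}((13)(24))$, a group of order $8$; discarding $\iota,(24),(12)(34)$ as forbidden by Lemma~\ref{weight 2 is impossible} leaves the five candidates $(13),(13)(24),(14)(23),(1234),(1432)$.

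The main obstacle is eliminating the three order-$2$ candidates, since the centralizer-of-normalizer condition alone does not distinguish them from the two $4$-cycles. Here I would exploit the order of $T$. Because $T + \sigma \in I$ and the characteristic is $2$, the quotient map $\varphi$ satisfies $\varphi(T) = \sigma$, hence $\varphi(T^2) = \sigma^2$. If $\sigma$ had order $2$ this would force $T^2 + \iota \in I$, that is $(1234) + (1432) \in I$, a weight-$2$ element contradicting Lemma~\ref{weight 2 is impossible}. Therefore $\sigma$ has order $4$, and the only order-$4$ elements among the five candidates are $(1234)$ and $(1432)$. Substituting each into $T + \sigma$ yields precisely the two elements in the statement.
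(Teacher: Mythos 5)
Your proof is correct, and part~(\ref{S_3 part}) follows the paper's argument exactly (the paper also takes a copy of $S_3$, notes $H+\iota$ is a unit via Lemma~\ref{even unit lemma}, and concludes $\sigma=\iota$ from the trivial centralizer; you simply supply the normalizer and centralizer computations that the paper leaves to the reader). Part~(\ref{weight 3 part}) is where you genuinely diverge: the paper, after exhibiting the same unit $T=\iota+(24)+(12)(34)$, disposes of the remaining candidates for $\sigma$ by a Magma search over all choices of $\sigma\in S_4$, checking which two-sided ideals $(T+\sigma)$ contain a weight-$2$ element. You instead give a computer-free argument: Proposition~\ref{centralizer of normalizer} confines $\sigma$ to the order-$8$ centralizer of $(13)(24)$ (your computation $N_{S_4}(\{\iota,(24),(12)(34)\})=\{\iota,(13)(24)\}$, using the distinct cycle types, is right), Lemma~\ref{weight 2 is impossible} kills three candidates outright, and the relation $\varphi(T^2)=\varphi(\sigma)^2$ kills the three involutions because $T^2+\iota=(1234)+(1432)$ would be a forbidden weight-$2$ element. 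That last step is exactly the squaring trick the paper deploys in the proof of Theorem~\ref{unit group S_n} (there raised to the $16$th power), transplanted to the $S_4$ setting; it buys you a fully human-verifiable proof where the paper settles for a computation, at the cost of a slightly longer case analysis. All of your individual computations ($T^2=\iota+(1234)+(1432)$, $T^4=\iota$, the normalizer and centralizer identifications) check out.
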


\begin{proof}
To prove (\ref{S_3 part}), let $H$ denote an isomorphic copy of $S_3$ contained inside of $S_4$, and view $H$ as an element of $\FF_2[S_4]$ as usual.  Then by Lemma~\ref{even unit lemma}, $H + \iota$ is a unit in $\FF_2[S_4]$.  Then by Proposition~\ref{centralizer of normalizer}, we find that 
\[
H + \iota + \sigma \in I
\]
for some $\sigma$ in the centralizer of $H$ in $S_4$.  The only possibility is $\sigma = \iota$, which completes the proof of (\ref{S_3 part}).  

To prove (\ref{weight 3 part}), we again find a unit $T \in \FF_2[S_4]$ and consider the possible values of $\sigma$ such that $T + \sigma \in I$.  Let $T = \iota + (24) + (12)(34)$.  The fact that $T$ is a unit of order 4 with inverse 
\[
\iota + (1234) + (1432) + (14)(23) + (13)
\]
is readily verified\footnote{The unit $T$ was found using \cite[Theorem~1.2]{Jen41}, which shows that $(24)+(12)(34)$ is nilpotent, because it is an even weight element consisting of elements in a copy of the $2$-group $D_4 \subset S_4$.}.   
Using Magma, it was verified that $\sigma = (1234)$ and $\sigma = (1432)$ were the only choices for which the two-sided ideal generated by $T + \sigma$ did not contain an element of weight 2.  
\end{proof}

\begin{Thm} \label{S_4 theorem}
Let $J_1$ (respectively, $J_2$) denote the two-sided ideal in $\FF_2[S_4]$ generated by the two elements
\[
\iota + (24) + (12)(34) + (1234) \qquad \text{  (respectively, } \iota + (24) + (12)(34) + (1432))
\] 
and 
\[
\iota + (12) + (23) + (13) + (123) +(132).
\]
Let $R_1 := \FF_2[S_4]/J_1$ and let $R_2 := \FF_2[S_4]/J_2$.

The rings $R_1$ and $R_2$ are nonisomorphic rings with $128$ elements and with unit group isomorphic to $S_4$.  Every ring with unit group isomorphic to $S_4$ contains a subring isomorphic to either $R_1$ or $R_2$.
\end{Thm}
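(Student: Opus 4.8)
The proof combines a finite linear-algebra computation with the modular representation theory of $S_4$ at the prime $2$. I would organize it into four steps.

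\emph{Step 1 (size).} The claim $|R_i|=128$ is the assertion $\dim_{\FF_2} J_i = 17$, equivalently $\dim_{\FF_2} R_i = 7$. Since $J_i$ is the two-sided ideal generated by two explicit elements $g_1,g_2$, I would compute the $\FF_2$-span of $\{\sigma x \rho : \sigma,\rho \in S_4,\ x \in \{g_1,g_2\}\}$ and check that this span has dimension $17$ inside the $24$-dimensional algebra $\FF_2[S_4]$. This is a mechanical rank computation over $\FF_2$.

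\emph{Step 2 (unit group).} Here I would use that $S_4$ has exactly two irreducible $\FF_2$-representations: the trivial one and the $2$-dimensional module pulled back along $S_4 \twoheadrightarrow S_4/V \cong S_3 \cong GL_2(\FF_2)$, where $V$ is the Klein four subgroup. Writing $N$ for the Jacobson radical of $R_i$, the semisimple quotient $R_i/N$ is a product of matrix algebras indexed by the isomorphism classes of simple modules that survive, hence a subproduct of $\FF_2 \times M_{2\times 2}(\FF_2)$. I would check that both factors occur, using the augmentation $\FF_2[S_4]\to\FF_2$ for the trivial factor and the map $\FF_2[S_4]\to M_{2\times 2}(\FF_2)$ induced by $S_4 \to GL_2(\FF_2)$ for the other; a direct substitution shows both generators lie in each kernel (for the second factor this uses that $(24)$ and $(1234)$ differ by an element of $V$, as do $\iota$ and $(12)(34)$, so the four terms of $g_1$ cancel in pairs modulo $V$ in characteristic $2$, while $g_2$ maps to the full group-sum of $GL_2(\FF_2)$, which is $0$). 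Hence $R_i/N \cong \FF_2 \times M_{2\times 2}(\FF_2)$, so by Step 1 we get $\dim N = 2$ and $|R_i^{\times}| = |1+N|\cdot|(\FF_2 \times M_{2\times 2}(\FF_2))^{\times}| = 4\cdot 6 = 24$. It remains to see that $S_4 \to R_i^{\times}$ is injective: its kernel is normal, and since $S_4$ already surjects onto $GL_2(\FF_2)\cong S_3$ in $R_i/N$, the kernel is contained in $V$, hence is $\{\iota\}$ or $V$; if it were $V$ then $\FF_2[S_4]\to R_i$ would factor through $\FF_2[S_3]$, forcing $\dim R_i \leq 6$ and contradicting Step 1. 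Therefore the image of $S_4$ has order $24 = |R_i^{\times}|$ and $R_i^{\times}\cong S_4$.

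\emph{Step 3 (non-isomorphism).} I would exploit the anti-automorphism $\dagger$ of $\FF_2[S_4]$ sending $g \mapsto g^{-1}$. Since $(24)$ and $(12)(34)$ are involutions, $(1234)^{-1}=(1432)$, and the second generator is the sum over the subgroup $\langle (12),(23)\rangle$ (closed under inversion), $\dagger$ carries the generators of $J_1$ to those of $J_2$; thus $R_2 \cong R_1^{\mathrm{op}}$, and it suffices to prove $R_1 \not\cong R_1^{\mathrm{op}}$. Both rings have $R/N \cong \FF_2\times M_{2\times 2}(\FF_2)$ with $N^2=0$ and $\dim N = 2$, so they are distinguished only by the bimodule structure of $N$; I would exhibit a left–right asymmetric invariant, for instance that the Ext-quiver between the trivial and the $2$-dimensional simple module is not symmetric (equivalently, that the number of minimal left ideals differs from the number of minimal right ideals), an invariant preserved by isomorphism but interchanged by passage to the opposite ring.

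\emph{Step 4 (universality).} Let $R$ be any ring with $R^{\times}\cong S_4$. By Proposition~\ref{ideal exists}, $R$ contains a subring isomorphic to $\FF_2[S_4]/I$ for an ideal $I$ with the stated properties, and by Proposition~\ref{S_4 ideal} this $I$ contains the second generator together with one of the two weight-$4$ elements, so $I \supseteq J_1$ or $I \supseteq J_2$. The heart of the matter, and the step I expect to be the main obstacle, is to upgrade this to $I = J_1$ or $I = J_2$, i.e.\ to show that neither $R_1$ nor $R_2$ admits a proper quotient whose unit group is still $S_4$ with the identity restriction. The danger is a quotient that kills the trivial $\FF_2$ factor while retaining $M_{2\times 2}(\FF_2)$ and a $2$-dimensional radical, which would numerically again give $24$ units. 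This must be excluded by showing that $R_i$ is \emph{connected}: the trivial and $2$-dimensional simple modules lie in a single block of $\FF_2[S_4]$, so the idempotent $(1,0)\in R_i/N$ does not lift to a central idempotent of $R_i$ and no $\FF_2$ factor can be split off. I would make this precise either by analyzing the two-dimensional radical as a bimodule together with the two-sided ideal lattice of $R_i$ directly, or by the machine enumeration of ideals used elsewhere in the paper. Granting $I = J_1$ or $J_2$, the subring produced by Proposition~\ref{ideal exists} is isomorphic to $R_1$ or $R_2$, completing the proof.
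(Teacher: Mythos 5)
Your Step 2 is a genuinely different route from the paper's and it works: the paper simply enumerates the $128$ elements and $24$ units of $R_1$ in Magma, whereas you derive $|R_i^{\times}|=24$ structurally from $R_i/N\cong\FF_2\times M_{2\times 2}(\FF_2)$ together with $\dim_{\FF_2}N=2$, and your injectivity argument (the kernel of $S_4\to R_i^{\times}$ is a normal subgroup contained in the Klein group $V$, and kernel $V$ would force $R_i$ to be a quotient of the $6$-dimensional $\FF_2[S_3]$, contradicting $\dim R_i=7$) is correct. This buys a conceptual explanation of the unit count at the price of still needing the rank computation of Step 1, which is of the same nature as the paper's machine verification. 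The observation $R_2\cong R_1^{\mathrm{op}}$ via $g\mapsto g^{-1}$ is also correct and is mentioned in the paper.

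There are, however, two genuine gaps. In Step 3 you reduce to $R_1\not\cong R_1^{\mathrm{op}}$ and then only \emph{name} a candidate invariant (asymmetry of the bimodule $N$, equivalently of the Ext-quiver) without verifying it; if $N$ happened to decompose symmetrically between the two idempotents the step would collapse, so this asymmetry must actually be computed, and you have not done so. The paper's argument is complete and more elementary: an isomorphism $R_1\to R_2$ restricts to an automorphism of $S_4$, which is inner, hence would force $\tau J_1\tau^{-1}=J_2$; but $\tau J_1\tau^{-1}=J_1$ since $J_1$ is two-sided, and $J_1=J_2$ would place the weight-$2$ element $(1234)+(1432)$ in $J_1$, contradicting the injectivity of $S_4\to R_1^{\times}$. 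In Step 4 you correctly isolate the danger (a proper quotient of $R_i$ that still has $24$ units), but the block-theoretic fix is not sound as stated: the trivial and $2$-dimensional simples lying in one block of $\FF_2[S_4]$ does not prevent them from separating in a proper quotient of $R_i$ (the semisimple quotient $\FF_2\times M_{2\times 2}(\FF_2)$ already has two blocks), and in any case the problematic quotient need not arise from a central idempotent --- it is a quotient by an arbitrary two-sided ideal. In the end you defer to the same enumeration of ideals of $R_1$ that the paper performs in Magma (every nonzero principal ideal yields a quotient with at most $6$ units), so nothing is gained there and that computation cannot be omitted.
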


\begin{proof}
It can be verified in Magma that $R_1$ is a ring with $128$ elements and with exactly 24 distinct units corresponding to the cosets $\sigma + J_1$, for $\sigma \in S_4$.  (Sample Magma code which verifies this claim is given in Appendix~\ref{appendix code}.)  The same can be done for $R_2$, or it can be checked that $R_2 \cong R_1^{\mathrm{op}}$, from which the claim that $R_2^{\times} \cong R_1^{\times}$ follows.  

We next check that $R_1$ and $R_2$ are not isomorphic.  An isomorphism $\psi: R_1 \rightarrow R_2$ would induce an isomorphism $\psi: R_1^{\times} \rightarrow R_2^{\times}$.  Because the only automorphisms of $S_4$ are inner automorphisms, the restriction of $\psi$ to $S_4$ would have to correspond to conjugation by some element $\tau \in S_4$.  Consider the image of an arbitrary element $x := \sigma_1 + \cdots + \sigma_n \in J_1$ under the composition 
\[
\FF_2[S_4] \rightarrow \FF_2[S_4]/J_1 \stackrel{\psi}{\rightarrow} \FF_2[S_4]/J_2.
\]  
On one hand, $x$ must map to the coset $J_2$.  On the other hand, $x$ must map to  
\[
\tau \sigma_1 \tau^{-1} + \cdots + \tau \sigma_n \tau^{-1} + J_2.
\]

In this way, we reduce to showing that there does not exist an element $\tau \in S_4$ such that $\tau J_1 \tau^{-1} = J_2$.  Because $J_1$ is a two-sided ideal, we know that $\tau J_1 \tau^{-1} \subseteq J_1$, so we have reduced to showing $J_2 \not\subset J_1$.  If $J_2$ were contained in $J_1$, then the weight two element 
\[
\Big(\iota + (24) + (12)(34) + (1234)\Big)+  \Big(  \iota + (24) + (12)(34) + (1432) \Big) =  (1234) + (1432)
\]
would be contained in $J_1$, which would contradict the fact that the cosets $(1234) + J_1$ and $(1432) + J_1$ are distinct.

We now prove the final assertion, that any ring $R$ with unit group isomorphic to $S_4$ contains a subring isomorphic to $R_1$ or $R_2$.  We know that such a ring $R$ contains as a subring $\FF_2[S_4]/I$, where $I$ is an ideal as in Proposition~\ref{ideal exists}.   So it suffices to show that if $I$ is an ideal as in Proposition~\ref{ideal exists}, then $I = J_1$ or $J_2$.  It was proven in Proposition~\ref{S_4 ideal} that $I$ must contain either $J_1$ or $J_2$.  So it remains only to show that the ideal $I$ cannot be strictly larger than $J_1$ or $J_2$.  It was verified in Magma that for each nonzero principal ideal $(x)$ in $R_1$, the ring $R_1/(x)$ has at most 6 units.
\end{proof}

\section{The remaining cases}

\subsection{The abelian cases}  \label{abelian cases} These cases are trivial, but we include them for the sake of completeness.  

\begin{Prop}
For each group $G$ in the list
\[
S_1, S_2, A_1, A_2, A_3,
\]
there exists a ring with unit group isomorphic to $G$.
\end{Prop}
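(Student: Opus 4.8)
The plan is to exhibit, for each group in the list
\[
S_1, S_2, A_1, A_2, A_3,
\]
an explicit ring whose unit group is isomorphic to it, since each of these groups is abelian and small. First I observe the structure: $S_1 \cong A_1 \cong A_2$ is trivial, $S_2 \cong C_2$ has order $2$, and $A_3 \cong C_3$ is cyclic of order $3$. So it suffices to produce a ring with trivial unit group, a ring with unit group $C_2$, and a ring with unit group $C_3$.

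For the trivial unit group, I would take $R = \FF_2$, whose only unit is $1$; this realizes $S_1, A_1$, and $A_2$. For $S_2 \cong C_2$, I would take $R = \ZZ$ or $R = \FF_3$: the integers have unit group $\{\pm 1\} \cong C_2$, and likewise $\FF_3^{\times} \cong C_2$. For $A_3 \cong C_3$, I would take the field $\FF_4 = \FF_{2^2}$, whose unit group is cyclic of order $3$; equivalently one can use $\FF_2[x]/(x^2 + x + 1)$. In each case the verification is an immediate computation of the unit group of a field or of $\ZZ$, requiring no further argument.

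Since the statement asks only for \emph{existence} of such rings, the proof reduces entirely to displaying these examples and noting the isomorphism of each proposed unit group with the corresponding group in the list. There is essentially no obstacle here: the only point requiring even minimal care is confirming the abstract group isomorphisms $S_1 \cong A_1 \cong A_2 \cong \{\iota\}$, $S_2 \cong C_2$, and $A_3 \cong C_3$, all of which are standard. I would present the argument as a short table or list of the groups alongside their realizing rings, remarking that the unit-group computations are immediate.
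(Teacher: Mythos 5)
Your proposal is correct and matches the paper's proof essentially verbatim: the paper also realizes these groups as the unit groups of the fields $\FF_2$, $\FF_3$, $\FF_2$, $\FF_2$, $\FF_4$ respectively. Nothing further is needed.
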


\begin{proof}
The groups $S_1, S_2, A_1, A_2, A_3$ are cyclic groups of order $1,2,1,1,3$, respectively.  Hence, they are isomorphic to the unit groups of the fields $\FF_2, \FF_3, \FF_2, \FF_2, \FF_4$, respectively.  
\end{proof}

\subsection{Unit group $A_4$} \label{A_4 case}

In this section we give two different rings with unit group isomorphic to $A_4$.  We describe the first ring as an explicit quotient of $\FF_2[A_4]$.   We describe the second ring as a quotient of the ring of Hurwitz quaternions.  
\begin{Thm} \label{A_4 theorem}
Let $J \subseteq \FF_2[A_4]$ denote the two-sided ideal generated by the elements
\[
\iota + (12)(34) + (13)(24) + (14)(23) 
\]
and 
\[
\iota + (132) + (12)(34) + (143).
\]
Then the quotient $\FF_2[A_4]/J$ is a ring with 32 elements and with unit group isomorphic to $A_4$.  
\end{Thm}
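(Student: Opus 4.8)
The plan is to read off the ring-theoretic structure of $R := \FF_2[A_4]/J$ directly from the two generators, recover the unit group from that structure, and reserve the single genuine computation---the dimension of $J$---for the end.

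First I would identify the generators. The element $V := \iota + (12)(34) + (13)(24) + (14)(23)$ is the group-algebra sum over the normal Klein four subgroup $\{\iota,(12)(34),(13)(24),(14)(23)\} \trianglelefteq A_4$; since this subgroup is normal, $V$ is central in $\FF_2[A_4]$, and since it has even order, $V^2 = 0$. The natural surjection $\FF_2[A_4] \to \FF_2[A_4/V] \cong \FF_2[C_3] \cong \FF_2 \times \FF_4$ kills both generators: the four terms of $V$ lie in a single coset and sum to $0$, while the four terms of $\iota + (132) + (12)(34) + (143)$ fall into the cosets of $\iota$ and of $(132)$ two at a time, again summing to $0$. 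Hence $J \subseteq \operatorname{rad}(\FF_2[A_4]) = \ker(\FF_2[A_4] \to \FF_2 \times \FF_4)$, and consequently $R/\operatorname{rad}(R) \cong \FF_2 \times \FF_4$, a $3$-dimensional semisimple algebra with unit group $C_3$.

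Next I would pin down the radical of $R$. A short computation with the skew group ring $\FF_2[A_4] = \FF_2[V] * C_3$ shows $\operatorname{rad}^2(\FF_2[A_4]) = (V)$, which is therefore already contained in $J$; hence $\operatorname{rad}(R)^2 = 0$. Because the semisimple quotient is fixed in dimension $3$, the claim $|R| = 32$ is equivalent to $\dim_{\FF_2}\operatorname{rad}(R) = 2$, i.e. to $\dim_{\FF_2} J = 7$. Granting this, $1 + \operatorname{rad}(R)$ is elementary abelian of order $2^2 = 4$ (the radical squares to zero), normal in $R^\times$, with quotient $(R/\operatorname{rad}(R))^\times \cong C_3$; in particular $|R^\times| = 3 \cdot 4 = 12$. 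Finally I would identify $R^\times$ with $A_4$ rather than with the abelian group $C_2 \times C_2 \times C_3$: the natural map $A_4 \to R^\times$, $g \mapsto g + J$, is a homomorphism between two groups of order $12$, so it is an isomorphism as soon as it is injective, and injectivity is equivalent to $J$ containing no element of weight $2$ (if $g + J = h + J$ with $g \neq h$, then $g + h \in J$ has weight $2$). Equivalently, one checks that the conjugation action of the image of a $3$-cycle on $1 + \operatorname{rad}(R)$ is the nontrivial, fixed-point-free one, which is forced because $C_3$ acts on $\operatorname{rad}(\FF_2[A_4])/\operatorname{rad}^2$ through its $2$-dimensional irreducible representation over $\FF_2$ (the one realizing $\FF_4$).

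The main obstacle is the only nonstructural input: verifying the two finite facts $\dim_{\FF_2} J = 7$ and ``$J$ has no weight-$2$ element.'' Both are finite linear algebra over $\FF_2$: one generates the two-sided ideal by left- and right-multiplying each generator by all twelve elements of $A_4$ and row-reduces the resulting vectors in $\FF_2^{12}$. Because $\operatorname{rad}^2(\FF_2[A_4]) = (V) \subseteq J$ and $\operatorname{rad}(R)^2 = 0$, this reduces to computing the $(\FF_2 \times \FF_4)$-sub-bimodule of the $6$-dimensional space $\operatorname{rad}(\FF_2[A_4])/\operatorname{rad}^2$ generated by the image of the second generator and checking that it has dimension $4$. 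This is precisely the kind of computation carried out by Magma for the $S_4$ case in Theorem~\ref{S_4 theorem}.
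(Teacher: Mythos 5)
Your argument is correct, but it is a genuinely different proof from the one in the paper: the paper's proof of Theorem~\ref{A_4 theorem} is a one-line appeal to a Magma computation (adapting Appendix~\ref{appendix code}), whereas you derive almost everything structurally. Your key observations all check out: the first generator is the central element $V$ summing the normal Klein four-subgroup, the kernel of $\FF_2[A_4]\to\FF_2[A_4/V]\cong\FF_2\times\FF_4$ is the radical (dimension $9$), $\mathrm{rad}^2=(V)$ is $3$-dimensional and lies in $J$, and once $\dim_{\FF_2}J=7$ is known the unit group has order $3\cdot 4=12$ and is forced to be $A_4$ because $\mathrm{rad}(R)$ is a $2$-dimensional quotient of $\mathrm{rad}/\mathrm{rad}^2\cong(I_V/I_V^2)\otimes\FF_2[C_3]$, on which conjugation by a $3$-cycle acts with no trivial summand. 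The one computation you defer is even smaller than you suggest: modulo $\mathrm{rad}^2$ the second generator equals $\bigl(\iota+(12)(34)\bigr)\otimes(1+t^2)$ in the bimodule $(I_V/I_V^2)\otimes\FF_2[C_3]$, and the sub-bimodule it generates is exactly $(I_V/I_V^2)\otimes(\text{augmentation ideal})$, visibly $4$-dimensional, so $\dim J=3+4=7$ and $|R|=2^{12-7}=32$ by hand. What your approach buys is a machine-free, explanatory proof that also identifies the ring (radical square zero, semisimple quotient $\FF_2\times\FF_4$) and explains \emph{why} the unit group is $A_4$ rather than $C_2\times C_2\times C_3$; what the paper's approach buys is brevity and uniformity with the $S_4$ verification. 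Either is acceptable, and your reduction of injectivity of $A_4\to R^\times$ to the absence of weight-$2$ elements of $J$ matches the philosophy of Lemma~\ref{weight 2 is impossible}.
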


\begin{proof}
By adapting the Magma code in Appendix~\ref{appendix code}, this assertion is readily verified. 
\end{proof}

We next use quaternions to give a second example of a ring with unit group isomorphic to $A_4$.  First we set some notation.  

\begin{Def} \label{quaternion notation}
Let $B$ denote the division algebra $\QQ + \QQ i + \QQ j + \QQ k$, where $i,j,k$ are defined as in the Hamilton quaternions.  Let $\omega = \frac{1 + i + j + k}{2}$ and let $\mathcal{O} \subset B$ denote 
\[
\ZZ \oplus \ZZ i \oplus \ZZ j \oplus \ZZ \omega \subseteq B;
\]
this subgroup $\mathcal{O} \subset B$ is a subring known as the \emph{Hurwitz quaternions}.    
\end{Def}

The authors thank Noam Elkies for the following example.  

\begin{Thm} \label{A_4 theorem 2}
Let $\mathcal{O}$ denote the ring of Hurwitz quaternions, as in Definition~\ref{quaternion notation}.  The quotient ring $\mathcal{O}/2\mathcal{O}$ is a ring with 16 elements and with unit group isomorphic to $A_4$.    
\end{Thm}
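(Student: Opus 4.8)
The plan is to exploit the quaternionic structure directly rather than to defer to Magma. Since $\mathcal{O}$ is free of rank $4$ over $\ZZ$ with basis $1, i, j, \omega$, the quotient $\mathcal{O}/2\mathcal{O}$ is an $\FF_2$-vector space of dimension $4$ and hence has $16$ elements; this settles the cardinality. To locate the units I would use the reduced norm. Quaternion conjugation $x \mapsto \overline{x}$ is a ring anti-involution of $\mathcal{O}$ preserving $2\mathcal{O}$, so it descends to $\mathcal{O}/2\mathcal{O}$, and for every $x$ we have $x\overline{x} = \overline{x}x = N(x)$. Writing $x = a + bi + cj + d\omega$ and using $\omega = \tfrac12(1+i+j+k)$, a short computation gives
\[
N(x) = a^2 + b^2 + c^2 + d^2 + (a+b+c)d,
\]
which modulo $2$ (using $t^2 \equiv t$) becomes $a+b+c+d+(a+b+c)d$. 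This vanishes precisely when $d = 0$ and $a+b+c = 0$, giving exactly four non-units; for all other $x$ the norm is odd, so $\overline{x}$ is an inverse. Thus there are exactly $12$ units, matching $|A_4|$.

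Next I would pin down the ring structure. The four non-units form $\mathfrak{m} := \{0,\, 1+i,\, 1+j,\, i+j\}$, and one checks $\mathfrak{m}^2 = 0$ directly (for instance $(1+i)(1+j) = 1+i+j+k = 2\omega \equiv 0$). Hence $\mathcal{O}/2\mathcal{O}$ is a local ring with maximal ideal $\mathfrak{m}$ and residue field of order $4$, the latter generated by the image of $\omega$ because $\omega^2 = \omega - 1$ reduces to $\omega^2 + \omega + 1 = 0$. This yields a short exact sequence
\[
1 \longrightarrow 1 + \mathfrak{m} \longrightarrow (\mathcal{O}/2\mathcal{O})^{\times} \longrightarrow \FF_4^{\times} \longrightarrow 1.
\]
Since the characteristic is $2$ and $\mathfrak{m}^2 = 0$, every element of $1+\mathfrak{m}$ squares to $1$, so $1+\mathfrak{m}$ is the Klein four-group $V_4 \cong C_2 \times C_2$; concretely $1 + \mathfrak{m} = \{1, i, j, k\}$, using that $k \equiv 1+i+j$ modulo $2$. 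Because $\omega^3 = 1$ in the quotient, $\omega$ splits the sequence, exhibiting $(\mathcal{O}/2\mathcal{O})^{\times}$ as a semidirect product $V_4 \rtimes C_3$.

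It remains to identify the action, which is the \textbf{main obstacle}. The group $V_4 \rtimes C_3$ is isomorphic to $A_4$ when $C_3$ acts by a nontrivial (hence order-$3$, i.e.\ faithful into $\mathrm{Aut}(V_4) \cong S_3$) automorphism, and to the abelian group $C_6 \times C_2$ when the action is trivial; so everything comes down to showing that conjugation by $\omega$ permutes $\{i, j, k\}$ nontrivially. The cleanest route is the rotation interpretation: $\omega = \cos 60^\circ + (\sin 60^\circ)\tfrac{i+j+k}{\sqrt{3}}$ is a unit quaternion, and conjugation by it rotates the space of pure quaternions by $120^\circ$ about the axis $(1,1,1)$, cyclically permuting $i \to j \to k \to i$. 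Reducing modulo $2$, conjugation by $\omega$ is therefore a $3$-cycle on the three nontrivial elements of $1+\mathfrak{m}$, which is the faithful action. Hence $(\mathcal{O}/2\mathcal{O})^{\times} \cong V_4 \rtimes C_3 \cong A_4$. If one prefers to avoid the geometric argument, the same conclusion follows from computing $\omega i \omega^{-1}$ and $\omega j \omega^{-1}$ directly in $\mathcal{O}$ using $\omega^{-1} = 1 - \omega$ and then reducing modulo $2$; either way, the only real content is verifying that this action is a $3$-cycle rather than the identity.
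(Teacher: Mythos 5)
Your proof is correct, but it takes a genuinely different route from the paper's. The paper cites \cite[Proposition~3]{HM04} for the fact that $\mathcal{O}^{\times}$ is the binary tetrahedral group, observes that the kernel of $\mathcal{O}^{\times} \to (\mathcal{O}/2\mathcal{O})^{\times}$ is $\mathcal{O}^{\times} \cap (1+2\mathcal{O}) = \{\pm 1\}$, so that the unit group of the quotient contains a copy of $A_4$, and then finishes by counting: a $16$-element ring has at most $15$ units, and a group of order at most $15$ containing a subgroup of order $12$ has order exactly $12$. You instead work entirely inside the finite ring: the reduced norm mod $2$ isolates the $12$ units, the four remaining elements form a square-zero two-sided ideal $\mathfrak{m}$, and the resulting extension $1 \to 1+\mathfrak{m} \to (\mathcal{O}/2\mathcal{O})^{\times} \to \FF_4^{\times} \to 1$ is split by $\omega$, with conjugation by $\omega$ cycling $i \to j \to k$, giving the faithful action $V_4 \rtimes C_3 \cong A_4$. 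Your computations check out (e.g.\ $N(x) \equiv a+b+c+d+(a+b+c)d$, $(1+i)(1+j) = 2\omega$, $\omega^2 = \omega - 1$, $\omega i \omega^{-1} = j$). What the paper's argument buys is brevity, at the cost of an external reference and of saying nothing about the ring itself; what yours buys is self-containedness and the explicit local structure of $\mathcal{O}/2\mathcal{O}$ (residue field $\FF_4$, square-zero radical), which dovetails with the paper's subsequent remark identifying $\mathcal{O}/2\mathcal{O}$ with $\FF_2[A_4]/(\iota + (123) + (132))$. One small presentational point: when you first declare the four elements of even norm to be ``non-units,'' you should either invoke multiplicativity of the norm modulo $2$ or note that this is justified retroactively once you show $\mathfrak{m}^2 = 0$, since nilpotent elements cannot be units; as written the assertion slightly precedes its justification.
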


\begin{proof}
By \cite[Proposition~3]{HM04}, the unit group $\mathcal{O}^{\times}$ is isomorphic to the binary tetrahedral group; in particular, there is a short exact sequence 
\[
1 \to \{ \pm 1 \} \rightarrow \mathcal{O}^{\times}  \to  A_4 \rightarrow 1.
\]
The kernel of the induced map
\[
\mathcal{O}^{\times} \rightarrow \left( \mathcal{O}/2\mathcal{O}\right)^{\times}
\]
is exactly $\mathcal{O}^{\times} \cap (1 + 2\mathcal{O}) = \{\pm 1\}$.  Hence $(\mathcal{O}/2\mathcal{O})^{\times}$ contains a subgroup isomorphic to $A_4$.  On the other hand, $\mathcal{O}/2\mathcal{O}$ is a ring with 16 elements.  Hence its unit group must be precisely $A_4$.  
\end{proof}

\begin{Rmk}
The ring $\mathcal{O}/2\mathcal{O}$ from Theorem $\ref{A_4 theorem 2}$ is isomorphic to $\FF_2[A_4]/J$, where $J$ is the ideal generated by $\iota + (123) + (132)$.  
\end{Rmk}

\subsection{Unit group $A_8$}  The only remaining case is $A_8$, which we recall in the following theorem.

\begin{Thm} \label{A_8 theorem}
The unit group of $M_{4\times 4}(\FF_2)$ is isomorphic to $A_8$.  
\end{Thm}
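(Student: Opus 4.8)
The plan is to exhibit an explicit isomorphism $\mathrm{GL}_4(\FF_2) \cong A_8$, and the cleanest route is a \emph{counting plus exceptional-isomorphism} argument rather than a direct construction of the map. First I would compute the order of the unit group: $|\mathrm{GL}_4(\FF_2)| = (2^4-1)(2^4-2)(2^4-2^2)(2^4-2^3) = 15 \cdot 14 \cdot 12 \cdot 8 = 20160$, which equals $8!/2 = |A_8|$. So the two groups at least have the same order, and it suffices to produce an isomorphism (or to invoke simplicity and classify).

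The central tool I would use is the classical chain of exceptional isomorphisms
\[
A_8 \cong \mathrm{GL}_4(\FF_2) \cong \mathrm{PSL}_4(\FF_2),
\]
the second equality being automatic since the center of $\mathrm{GL}_4(\FF_2)$ is trivial (scalars are just the identity, as $\FF_2^\times = \{1\}$) and every element has determinant $1$ over $\FF_2$, so $\mathrm{GL}_4(\FF_2) = \mathrm{SL}_4(\FF_2) = \mathrm{PSL}_4(\FF_2)$. The group $\mathrm{PSL}_4(\FF_2)$ is a finite simple group of order $20160$. The key input is that $A_8 \cong \mathrm{PSL}_4(2)$ is one of the standard exceptional isomorphisms among small simple groups; I would cite a standard reference (for instance the ATLAS of finite groups or Dickson's classical work) for this fact rather than reprove it, since the theorem is explicitly labeled as a result we \emph{recall}.

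Alternatively, to make the statement self-contained one can argue via uniqueness: there is a unique simple group of order $20160$ up to the single well-known coincidence, but in fact there are exactly two nonisomorphic simple groups of that order ($A_8$ and $\mathrm{PSL}_3(\FF_4)$), so a pure order count does not suffice. Thus I would instead construct the action directly: $\mathrm{GL}_4(\FF_2)$ acts on the $15$ nonzero vectors of $\FF_2^4$, but the efficient identification is to let $\mathrm{GL}_4(\FF_2)$ act on an $8$-element set arising from a quadratic form or from the $8$ cosets giving an embedding into $S_8$, then check the image lands in $A_8$ and has the right order. The main obstacle is precisely this last point: exhibiting a natural $8$-point set on which $\mathrm{GL}_4(\FF_2)$ acts faithfully with image in $A_8$. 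The standard resolution identifies the $8$ points with a spread or with the two families of maximal isotropic subspaces for an $O^+_6$-type form after using $\mathrm{GL}_4 \cong \Omega^+_6(2)$ via the exterior square $\wedge^2 \FF_2^4 \cong \FF_2^6$; verifying that the induced form has the right type and that the orthogonal group acts $2$-transitively on the correct $8$-element set is the delicate combinatorial heart of the argument. Given that the paper treats this as a recalled classical fact, the expedient choice is to cite the exceptional isomorphism and observe the order equality as a sanity check.
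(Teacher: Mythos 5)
Your proposal is correct and follows essentially the same route as the paper: identify $M_{4\times 4}(\FF_2)^{\times} = GL_4(\FF_2) = SL_4(\FF_2) = PSL_4(\FF_2)$ and then cite the classical exceptional isomorphism $PSL_4(\FF_2) \cong A_8$ (the paper cites Wilson's book for this). Your additional remarks on the order count and on why order alone does not suffice (because of $PSL_3(\FF_4)$) are sound but not needed once the exceptional isomorphism is invoked.
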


\begin{proof}
We have 
\[
M_{4 \times 4}(\FF_2)^{\times} = GL_4(\FF_2) = PSL_4(\FF_2) \cong A_8.
\]
For this last isomorphism, see \cite[Section~3.12.1]{Wil09}.
\end{proof}

\subsection*{Acknowledgments} 

The question studied in this paper was first posed to the second author by Charles Toll.  Special thanks to him, and to John F.\ Dillon, Dennis Eichhorn, Noam Elkies, Kiran Kedlaya, and Ryan Vinroot for many useful conversations.  The authors made frequent use of both Magma \cite{Magma} and Sage \cite{Sage} while investigating this question.  The free Magma online calculator \url{http://magma.maths.usyd.edu.au/calc/} was especially helpful.

\appendix

\section{Sample Magma code} \label{appendix code}

To find a ring with unit group isomorphic to $S_4$, we explicitly computed the unit group of a certain quotient of $\FF_2[S_4]$.  The computation was done in Magma, and we next provide sample code which performs this computation.  

\begin{Eg}  The following was used at the beginning of the proof of Theorem~\ref{S_4 theorem}.  It first creates the ring $R_1$ and counts its total number of elements as well as its number of units.  It then ensures that no elements $\sigma_1 \neq \sigma_2$ become equal in $R_1 \cong \FF_2[S_4]/I$.   

\bigskip

\begin{verbatim}
G:=SymmetricGroup(4);
F2G:=GroupAlgebra(GF(2), G);

x1:= F2G!G!1+F2G!G!(2,4)+F2G!G!(1,2)(3,4)+F2G!G!(1,2,3,4);

x2:=F2G!0;
H1:=sub<G|(1,2),(1,2,3)>;
for h in H1 do
    x2:=x2+F2G!h;
end for;

I:=ideal<F2G|x1, x2>;
R1:= F2G/I;

numunits:=0;
for x in R1 do
    if IsUnit(x) then
        numunits:=numunits+1;
    end if;
end for;

#(F2G/I);
numunits;

for y1 in G do
    for y2 in G do
        if F2G!y1 + F2G!y2 in I then
            if y1 ne y2 then
                y1;
                y2;
            end if;
        end if;
    end for;
end for;
\end{verbatim}
\end{Eg}

\bibliography{units}
\bibliographystyle{plain}

\end{document}